\newcommand{\Cdb}{\ensuremath{\mathbb{C}}}
\newcommand{\A}{\mbox{${\mathcal A}$}}
\newcommand{\norm}[1]{\Vert#1\Vert}
\newcommand{\bignorm}[1]{\bigl\Vert#1\bigr\Vert}
\newcommand{\Bignorm}[1]{\Bigl\Vert#1\Bigr\Vert}
\newcommand{\hten}{\mathop{\otimes}\limits^{2}}
\newtheorem{theorem}{Theorem}[section]
\newtheorem{lemma}[theorem]{Lemma}
\newtheorem{proposition}[theorem]{Proposition}
\theoremstyle{remark}
\theoremstyle{definition}
\numberwithin{equation}{section}
\begin{document}

\title[]{A sharp equivalence between $H^\infty$ functional calculus and square 
function estimates}

\author{Christian Le Merdy}
\address{Laboratoire de Math\'ematiques\\ Universit\'e de  Franche-Comt\'e
\\ 25030 Besan\c con Cedex\\ France}
\email{clemerdy@univ-fcomte.fr}

\date{\today}

\begin{abstract} Let $(T_t)_{t\geq 0}$
be a bounded analytic semigroup on $L^p(\Omega)$, with $1<p<\infty$. Let $-A$ 
denote its infinitesimal generator.
It is known that if $A$ and $A^*$ both satisfy square function estimates 
$\bignorm{\bigl(\int_{0}^{\infty}\vert A^{\frac{1}{2}}
T_t(x)\vert^2\, dt\,\bigr)^{\frac{1}{2}}}_{L^p}
\,\lesssim\,\norm{x}_{L^p}$ and 
$\bignorm{\bigl(\int_{0}^{\infty}\vert A^{*\frac{1}{2}}
T_t^*(y)\vert^2\, dt\,\bigr)^{\frac{1}{2}}}_{L^{p'}}
\,\lesssim\,\norm{y}_{L^{p'}}$ for $x\in L^p(\Omega)$ and $y\in L^{p'}(\Omega)$,
then $A$ admits a bounded $H^{\infty}(\Sigma_\theta)$ functional 
calculus for any $\theta>\frac{\pi}{2}$. We show that this actually holds true for
some $\theta<\frac{\pi}{2}$.
\end{abstract}

\maketitle

\bigskip\noindent
{\it 2000 Mathematics Subject Classification : 
47A60, 47D06.}

\bigskip

\section{Introduction}
Let $(\Omega,\mu)$ be a measure space, let $1<p<\infty$ and let $(T_t)_{t\geq 0}$
be a bounded analytic semigroup on $L^p(\Omega)$. Let $-A$ denote its infinitesimal
generator. Various square functions can be associated to $(T_t)_{t\geq 0}$ and $A$.
In this paper we will focus on the following ones. For any positive real number $\alpha>0$
and any $x\in L^p(\Omega)$, we set
\begin{equation}\label{Alpha}
\norm{x}_{A,\alpha}\,=\,\Bignorm{\Bigl(\int_{0}^{\infty} t^{2\alpha-1}
\bigl\vert A^{\alpha}T_t(x)\bigr\vert^2
\, dt\,\Bigr)^{\frac{1}{2}}}_{L^p(\Omega)}.
\end{equation}
Such square functions have played a key role in the analysis of analytic semigroups
and in various estimates involving them for at least 40 years. 
In \cite{MI} and \cite{CDMY}, McIntosh and his co-authors introduced $H^\infty$ functional 
calculus and gave remarkable connections
between boundedness of that functional 
calculus and estimates of square functions on $L^p$-spaces. The aim
of this note is to give an improvement of their main result regarding the angle 
condition in the
$H^\infty$ functional calculus.

\bigskip
We start with a little background on sectoriality and $H^\infty$ functional calculus.
We refer to \cite{CDMY, H, KW1, LM1, LM3, MI} for details and complements. For any angle
$\omega\in(0,\pi)$, we consider the sector $\Sigma_\omega=\{z\in\Cdb^*\, :\, 
\vert{\rm Arg}(z)\vert<\omega\}$. 
Let $X$ be a Banach space and let $B(X)$ denote the algebra of all bounded operators
on $X$. A closed, densely defined linear operator $A\colon D(A)\subset X\to X$
is called sectorial of type $\omega$ if its spectrum $\sigma(A)$ is included in
the closed sector $\overline{\Sigma_\omega}$, and for any angle
$\theta\in(\omega,\pi)$, there is a constant $K_\theta>0$ such that 
\begin{equation}\label{Sector}
\vert  z \vert\norm{(z -A)^{-1}}\leq K_\theta,\qquad z
\in\Cdb\setminus \overline{\Sigma_\theta}.
\end{equation} 
We recall that sectorial operators of type $<\frac{\pi}{2}$ coincide with 
negative generators of bounded analytic semigroups.

For any $\theta\in(0,\pi)$, let $H^{\infty}(\Sigma_\theta)$ be the algebra of all bounded 
analytic functions $f\colon \Sigma_\theta\to \Cdb$, equipped with the supremum norm
$\norm{f}_{\infty,\theta}=\,\sup\bigl\{\vert f(z)\vert \, :\, z\in \Sigma_\theta\bigr\}.$
Let $H^{\infty}_{0}(\Sigma_\theta)\subset H^{\infty}(\Sigma_\theta)$ be the subalgebra
of bounded 
analytic functions $f\colon \Sigma_\theta\to \Cdb$ for which there exist $s,c>0$
such that $\vert f(z)\vert\leq c \vert z \vert^s\bigl(1+\vert z \vert)^{-2s}\,$ 
for any $z\in \Sigma_\theta$. Given a sectorial operator $A$ of type $\omega\in(0,\pi)$, 
a bigger angle 
$\theta\in(\omega,\pi)$, and a function $f\in H^{\infty}_{0}(\Sigma_\theta)$,
one may define a bounded operator $f(A)$ by means of a Cauchy integral (see e.g. \cite[Section 2.3]{H}
or \cite[Section 9]{KuW});
the resulting
mapping $H^{\infty}_{0}(\Sigma_\theta)\to B(X)$ taking $f$ to $f(A)$ is
an algebra homomorphism. By definition, $A$ has a bounded $H^{\infty}(\Sigma_\theta)$
functional calculus provided that this homomorphism is bounded, that is, there exists
a constant $C>0$ such that $\norm{f(A)}_{B(X)}\leq C\norm{f}_{\infty,\theta}$ for any 
$f\in H^{\infty}_{0}(\Sigma_\theta)$. In the case when $A$ has a dense range, 
the latter boundedness condition allows a natural extention of
$f\mapsto f(A)$ to the full algebra $H^{\infty}(\Sigma_\theta)$.

It is clear that if $\omega<\theta_1<\theta_2<\pi$ and the operator $A$ admits a
bounded $H^{\infty}(\Sigma_{\theta_1})$ functional calculus, then it 
admits a bounded $H^{\infty}(\Sigma_{\theta_2})$ functional calculus as well.
Indeed we have $H^{\infty}(\Sigma_{\theta_2})\subset H^{\infty}(\Sigma_{\theta_1})$,
with $\norm{\cdotp}_{\infty,\theta_1}\leq\norm{\cdotp}_{\infty,\theta_2}$.
However the converse is wrong, as shown by Kalton \cite{K}. The resulting issue of reducing the 
angle of a bounded  $H^{\infty}$ functional calculus is at the heart of the
present work.

\bigskip
Let us now focus on the case when $X=L^p(\Omega)$, with $1<p<\infty$.
For simplicity we let $\norm{\ }_p$ denote the norm on this space.
We let $p'=p/(p-1)$ denote the conjugate number of $p$.
Assume that $A\colon D(A)\subset L^p(\Omega)
\to L^p(\Omega)$ is a sectorial operator of
type $<\frac{\pi}{2}$, and let $T_t=e^{-tA}$ for $t\geq 0$. 
It follows from \cite[Cor. 6.7]{CDMY} that
if $A$ has a bounded $H^{\infty}(\Sigma_\theta)$ 
functional calculus for some $\theta<\frac{\pi}{2}$, then for any $\alpha>0$, it satisfies a
square function estimate 
$$
\norm{x}_{A,\alpha}\lesssim\norm{x}_p,\qquad x\in L^p(\Omega).
$$ 
Further, the adjoint $A^*$ is a sectorial operator 
of type $\omega$ on $X^*=L^{p'}(\Omega)$ and $f(A)^*=f(A^*)$ for any
$f\in H^{\infty}_{0}(\Sigma_\theta)$. Thus $A^*$ admits a bounded $H^{\infty}(\Sigma_\theta)$
functional calculus as well hence  
satisfies estimates $\norm{y}_{A^*,\alpha}\lesssim\norm{y}_{p'}$
for $y\in L^{p'}(\Omega)$. 

Conversely, it follows from \cite[Cor. 6.2]{CDMY}
and its proof that if $A$ and $A^*$ satisfy square function 
estimates $\norm{x}_{A,\frac{1}{2}}\lesssim\norm{x}_p$ and 
$\norm{y}_{A^*,\frac{1}{2}}\lesssim\norm{y}_{p'}$,
then $A$ admits a bounded $H^{\infty}(\Sigma_\theta)$ functional calculus
for any $\theta>\frac{\pi}{2}$. 
Our main result is that the latter property
can be achieved for some $\theta<\frac{\pi}{2}$. Altogether, 
this yields the following equivalence statement.

\begin{theorem}\label{Main}
Let $1<p<\infty$ and let $(T_t)_{t\geq 0}$ be a 
bounded analytic semigroup on $L^p(\Omega)$, with generator $-A$.
The following assertions are equivalent.
\begin{itemize}
\item [(i)] $A$ and $A^*$ satisfy square function estimates
\begin{equation}\label{SFE}
\norm{x}_{A,\frac{1}{2}}\lesssim\norm{x}_p
\qquad\hbox{and}\qquad
\norm{y}_{A^*,\frac{1}{2}}\lesssim\norm{y}_{p'}
\end{equation}
for $x\in L^p(\Omega)$ and $y\in L^{p'}(\Omega)$.
\item [(ii)] There exists $\theta\in \bigl(0,\frac{\pi}{2}\bigr)$ such that
$A$ admits a bounded $H^{\infty}(\Sigma_\theta)$ functional calculus.
\end{itemize}
\end{theorem}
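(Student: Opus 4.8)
The implication (ii)$\Rightarrow$(i) is the soft one. If $A$ has a bounded $\HI(\Sigma_\theta)$ functional calculus for some $\theta<\frac{\pi}{2}$, then \cite[Cor.~6.7]{CDMY} gives $\norm{x}_{A,\frac12}\lesssim\norm{x}_p$; since the calculus passes to $A^*$ at the same angle (with $f(A)^*=f(A^*)$), it likewise gives $\norm{y}_{A^*,\frac12}\lesssim\norm{y}_{p'}$. So the work is all in (i)$\Rightarrow$(ii). Let $\omega<\frac\pi2$ be the type of $A$. By \cite[Cor.~6.2]{CDMY} we already have a bounded $\HI(\Sigma_{\theta})$ calculus for every $\theta>\frac\pi2$, so the entire issue is to push the angle below $\frac\pi2$: the plan is to prove a uniform bound $\norm{f(A)}_{B(L^p)}\lesssim\norm{f}_{\infty,\nu}$ for all $f\in\HI_0(\Sigma_\nu)$ and one fixed $\nu<\frac\pi2$. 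The engine is the function $\psi(z)=z^{\frac12}e^{-z}$, for which $\norm{x}_{A,\frac12}=\bignorm{\bigl(\int_0^\infty\vert\psi(tA)x\vert^2\frac{dt}{t}\bigr)^{\frac12}}_p$. Two properties of $\psi$ drive everything: \emph{(a)} $\psi\in\HI_0(\Sigma_\theta)$ exactly when $\theta<\frac\pi2$, since $e^{-z}$ decays only on the right half-plane; this is precisely why a sub-$\frac\pi2$ angle is reachable. \emph{(b)} The Calder\'on identity $\int_0^\infty\psi(tz)^2\frac{dt}{t}=\frac12$ holds throughout $\Sigma_{\pi/2}$, whence $\int_0^\infty\psi(tA)^2\frac{dt}{t}=\frac12\,I$ on $\overline{\mathrm{Ran}(A)}$ (where $f(A)$ is supported, as $f(0)=0$). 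Fix $\omega<\omega'<\nu<\frac\pi2$.

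First I carry out the duality step. For $f\in\HI_0(\Sigma_\nu)$, $x\in L^p$, $y\in L^{p'}$, multiplying the Calder\'on identity by $f(A)$ and commuting functions of $A$ gives $\frac12 f(A)x=\int_0^\infty\psi(tA)f(A)\psi(tA)x\,\frac{dt}{t}$, so that
\[
\tfrac12\langle f(A)x,y\rangle=\int_0^\infty\bigl\langle f(A)\psi(tA)x,\;\psi(tA^*)y\bigr\rangle\,\frac{dt}{t}.
\]
Cauchy--Schwarz in $t$ and H\"older on $\Omega$ yield
\[
\tfrac12\bigl\vert\langle f(A)x,y\rangle\bigr\vert\le\Bignorm{\Bigl(\int_0^\infty\vert f(A)\psi(tA)x\vert^2\tfrac{dt}{t}\Bigr)^{\frac12}}_p\,\norm{y}_{A^*,\frac12},
\]
and the last factor is $\lesssim\norm{y}_{p'}$ by (i). Everything therefore comes down to the single square function estimate
\[
\Bignorm{\Bigl(\int_0^\infty\vert f(A)\psi(tA)x\vert^2\tfrac{dt}{t}\Bigr)^{\frac12}}_p\lesssim\norm{f}_{\infty,\nu}\,\norm{x}_p .
\]

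Next I expose the kernel. Feeding the Calder\'on identity into $x$ turns the left-hand family into $\psi(tA)f(A)x=2\int_0^\infty K(t,s)\,\psi(sA)x\,\frac{ds}{s}$, with operator-valued kernel $K(t,s)=\psi(tA)f(A)\psi(sA)$; using $\psi(tz)\psi(sz)=(ts)^{1/2}ze^{-(t+s)z}$ one computes
\[
K(t,s)=\frac{(ts)^{1/2}}{t+s}\,\phi\bigl((t+s)A\bigr)f(A),\qquad \phi(z)=ze^{-z}.
\]
A plain Cauchy-integral estimate along $\partial\Sigma_{\omega'}$ bounds the operator norm: since on that contour $\vert\phi((t+s)\lambda)f(\lambda)\vert\le\norm{f}_{\infty,\nu}\,(t+s)\vert\lambda\vert\,e^{-(t+s)\vert\lambda\vert\cos\omega'}$, and this is integrable precisely because $\cos\omega'>0$ (i.e. $\omega'<\frac\pi2$), one gets $\norm{\phi((t+s)A)f(A)}_{B(L^p)}\lesssim(\cos\omega')^{-1}\norm{f}_{\infty,\nu}$, hence $\norm{K(t,s)}_{B(L^p)}\lesssim\norm{f}_{\infty,\nu}\,\frac{(ts)^{1/2}}{t+s}$. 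The scalar majorant $\frac{(ts)^{1/2}}{t+s}$ is a bounded Schur kernel for $L^2(\frac{dt}{t})$. This single estimate is where the whole gain lives: the exponential decay of $\phi$ makes the contour converge only below the angle $\frac\pi2$, and that is exactly the angle we then obtain.

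The remaining step — that the integral operator with kernel $K(t,s)$ is bounded on $L^p(\Omega;L^2(\frac{dt}{t}))$ with norm $\lesssim\norm{f}_{\infty,\nu}$ — is where I expect the real difficulty to lie, and where the argument must go beyond soft algebra. For $p=2$ it is the classical Schur test on $L^2(\Omega;L^2)$. For $p\neq2$ that test fails: dominating by the scalar kernel only controls the norm on $L^2((0,\infty);L^p)$, whereas we need it on $L^p(\Omega;L^2)$, and the two orderings are genuinely inequivalent. Moreover, no purely bilinear manipulation of the two square function hypotheses can produce the bound, since the pairing collapses back to $\langle f(A)w,v\rangle$; the analytic information carried by $K(t,s)$ must be used. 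The way through is an $\ell^2$-valued (square-function) strengthening of the Schur test: writing $K(t,s)=\frac{(ts)^{1/2}}{t+s}\,\Theta(t+s)$ with $\Theta(u)=\phi(uA)f(A)$, it suffices that the Gaussian family $\{\Theta(u):u>0\}$ be $R$-bounded on $L^p$ with constant $\lesssim\norm{f}_{\infty,\nu}$, for then a scalar Schur kernel may be convolved against an $R$-bounded family without leaving $L^p(\Omega;L^2)$. Establishing this $R$-boundedness is the crux: through the same contour representation it reduces to the $R$-sectoriality of $A$ at some angle below $\frac\pi2$ — equivalently, the $R$-analyticity of $\{\phi(uA):u>0\}$ — which is the genuinely analytic fact that the hypotheses (the square function estimates for $A$ and $A^*$, together with the wide-angle calculus already in hand) must be made to yield. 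Once that $R$-bounded Schur estimate is in place, the displayed square function estimate follows, hence $\norm{f(A)}_{B(L^p)}\lesssim\norm{f}_{\infty,\nu}$ for all $f\in\HI_0(\Sigma_\nu)$ with $\nu<\frac\pi2$, which is (ii).
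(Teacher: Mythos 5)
Your implication (ii)$\Rightarrow$(i) is fine and matches the paper's soft argument (\cite[Cor.~6.7]{CDMY} plus duality through $f(A)^*=f(A^*)$). The problem is in (i)$\Rightarrow$(ii): your argument is not a proof, because at its decisive point you defer rather than establish the key fact. Your Calder\'on/duality reduction and the kernel computation $K(t,s)=\frac{(ts)^{1/2}}{t+s}\,\phi\bigl((t+s)A\bigr)f(A)$ are correct algebra, but they only relocate the difficulty: you then state that the required bound on $L^p(\Omega;L^2(\frac{dt}{t}))$ would follow if the family $\{\phi(uA)f(A):u>0\}$ were $R$-bounded with constant $\lesssim\norm{f}_{\infty,\nu}$, and that establishing this ``reduces to the $R$-sectoriality of $A$ at some angle below $\frac{\pi}{2}$,'' which you call ``the genuinely analytic fact that the hypotheses must be made to yield.'' That unproven sentence \emph{is} the theorem: deriving $R$-sectoriality of $R$-type $<\frac{\pi}{2}$ from the two square function estimates is precisely the paper's main technical result (Theorem \ref{Main2}), and nothing in your proposal proves it. Moreover, once one has that $R$-sectoriality, your Calder\'on/Schur machinery becomes unnecessary: the Kalton--Weis angle-reduction theorem (\cite[Prop.~5.1]{KW1}, Proposition \ref{KW} of the paper), combined with the wide-angle $H^\infty(\Sigma_\theta)$ calculus for $\theta>\frac{\pi}{2}$ from \cite[Cor.~6.2]{CDMY} that you already invoke, immediately yields a bounded calculus at some angle below $\frac{\pi}{2}$. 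So your scaffolding either presupposes the hard part or amounts to re-proving Kalton--Weis while leaving the new ingredient unproven.

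For the record, here is how the paper fills exactly that hole, and what a completed version of your argument would need. First, a composition lemma for square functions (Lemma \ref{FVarphi}, proved via the Hilbert-space-valued tensor extension of Lemma \ref{Tensor}): taking $\Gamma(t)=A^{\frac12}T_t$ and $\varphi(s)=A^{\frac12}T_s(x)$, the identity $A^{\frac12}T_tA^{\frac12}T_s=AT_{t+s}$ and a Fubini computation bootstrap the hypothesis $\norm{x}_{A,\frac12}\lesssim\norm{x}_p$ into $\norm{x}_{A,1}\lesssim\norm{x}_p$, and then once more into $\norm{x}_{A,\frac32}\lesssim\norm{x}_p$. Second, the dual hypothesis (\ref{SFE4}) gives, by duality through the reproducing formula $g_n(A)x=2\int_0^\infty AT_{2t}g_n(A)x\,dt$, a reverse estimate $\norm{x}_p\lesssim\norm{x}_{A,\frac12}$ for $x\in\overline{R(A)}$. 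Third, averaging these estimates over Rademacher variables (via ${\rm Rad}(L^p(\Omega;K))\simeq L^p(\Omega;\ell^2(K))$) converts them into estimates for finite families $(x_k)_k$, after which the $R$-boundedness of $\{T_t:t\ge0\}$ and $\{tAT_t:t\ge0\}$ follows by sandwiching $T_{t_k}(x_k)$, resp. $t_kAT_{t_k}(x_k)$, between the reverse estimate and the forward estimate at exponent $\frac12$, resp. $\frac32$, using the shift $t\mapsto t+t_k$ and $t_k\le t+t_k$; by Weis's characterization this is $R$-sectoriality of $R$-type $<\frac{\pi}{2}$, and Kalton--Weis concludes. These three steps are what your ``crux'' actually requires; without them the proposal is a reduction, not a proof.
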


In the above discussion and later on in the paper, we write 
`$N_1(x)\lesssim N_2(x)$ for $x\in X$' to indicate an inequality
$N_1(x)\leq C N_2(x)$ which holds for a constant $C>0$ not depending
on $x\in X$.

It follows from the pionneering work of Dore and Venni \cite{DV} 
that if $A$ has a bounded $H^{\infty}(\Sigma_\theta)$ functional calculus
for some $\theta<\frac{\pi}{2}$, then it has the so-called maximal
$L^q$-regularity for $1<q<\infty$ (see e.g. \cite{KuW} for details). As a consequence 
of Theorem \ref{Main}, we see that $A$ has  maximal
$L^q$-regularity provided that $A$ and $A^*$ satisfy the square
function estimates (\ref{SFE}).

More observations will be given in Section 4. The implication `(i)$\,\Rightarrow\,$(ii)'
of Theorem \ref{Main} is proved in Section 3 (see the two lines following Theorem
\ref{Main2}). It uses preliminary
results and some background discussed in Section 2 below.

\section{Preparatory results}

Throughout we let $(\Omega,\mu)$ be a measure space and we fix a number $1<p<\infty$.

We start with a few observations on tensor products. 
Given any  Banach space $Z$, we regard the algebraic tensor
product $L^p(\Omega) \otimes Z$ as a (dense) subspace of the Bochner space
$L^p(\Omega; Z)$ in the usual way. It is plain that for any $S\in B(Z)$,
the tensor extension $I_{L^p}\otimes S$ of $S$ defined on $L^p(\Omega) \otimes Z$ extends to a
bounded operator $I_{L^p}\overline{\otimes} S\colon L^p(\Omega;Z) \to L^p(\Omega;Z)$,
whose norm is equal to $\norm{S}$. 

Let $K$ be a Hilbert space. It is well-known that similarly for any
$T\in B(L^p(\Omega))$, the tensor extension $T\otimes I_K$ extends to a
bounded operator $T\overline{\otimes} I_K\colon L^p(\Omega;K) \to L^p(\Omega;K)$,
whose norm is equal to $\norm{T}$. 

The following is an extension of the latter result. 

\begin{lemma}\label{Tensor}
Let $H,K$ be Hilbert spaces, and let $H\hten K$ denote their Hilbertian 
tensor product. For any bounded operator $T\colon L^p(\Omega)\to L^p(\Omega;H)$,
the tensor extension $T\otimes I_K$ extends to a bounded operator
$$
T\overline{\otimes}I_K\colon L^p(\Omega;K)\longrightarrow L^p(\Omega;H\hten K),
$$
whose norm is equal to $\norm{T}$.
\end{lemma}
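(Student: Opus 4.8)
The plan is to reduce everything to the scalar case $H=\Cdb$ stated just above (the second of the two tensor facts). The lower bound $\norm{T\overline{\otimes}I_K}\geq\norm{T}$ is immediate: for a unit vector $\xi\in K$ and $f\in L^p(\Omega)$ one has $\bigl\vert (Tf)(\omega)\otimes\xi\bigr\vert_{H\hten K}=\bigl\vert (Tf)(\omega)\bigr\vert_{H}$ pointwise, hence $\norm{(Tf)\otimes\xi}_{L^p(\Omega;H\hten K)}=\norm{Tf}_{L^p(\Omega;H)}$. For the upper bound I would first reduce to $K=\ell^2_n$: the algebraic tensor product $L^p(\Omega)\otimes K$ is dense, each of its elements lives in $L^p(\Omega)\otimes K_0$ for a finite dimensional $K_0\subseteq K$, and both norms are computed inside that subspace, so it suffices to bound $T\otimes I_{\ell^2_n}$ with a constant independent of $n$ (and one may assume $H$ finite dimensional as well). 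Writing $g=\sum_{i=1}^n f_i\otimes e_i$ with $(e_i)$ orthonormal, the inequality $\norm{(T\overline{\otimes}I_K)g}\leq\norm{T}\norm{g}$ becomes the square function estimate
\[
\Bignorm{\Bigl(\textstyle\sum_{i=1}^n\vert Tf_i\vert_H^2\Bigr)^{1/2}}_{p}\,\leq\,\norm{T}\;\Bignorm{\Bigl(\textstyle\sum_{i=1}^n\vert f_i\vert^2\Bigr)^{1/2}}_{p},
\]
where $\vert Tf_i\vert_H$ denotes the pointwise $H$-norm of $Tf_i$.

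The engine is the rotation invariance of the Euclidean norm. For a unit vector $\xi\in\Cdb^n$, put $f_\xi=\sum_i\overline{\xi_i}f_i$; this is a \emph{scalar} function, so $Tf_\xi=\sum_i\overline{\xi_i}Tf_i$ and the hypothesis gives $\norm{Tf_\xi}_{L^p(\Omega;H)}\leq\norm{T}\norm{f_\xi}_p$. I would integrate the $p$-th power of this estimate over the unit sphere $S$ of $\Cdb^n$ against its rotation invariant probability measure $\sigma$. On the domain side, the pointwise identity $\Vert v\Vert_2^p=\kappa_{n,p}\int_S\vert\langle v,\xi\rangle\vert^p\,d\sigma(\xi)$ applied to $v=(f_i(\omega))_i$ reconstitutes $\norm{(\sum_i\vert f_i\vert^2)^{1/2}}_p^p$ \emph{exactly}; this is precisely the mechanism behind the scalar case. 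Everything then comes down to comparing, pointwise in $\omega$, the spherical average $\kappa_{n,p}\int_S\bigl\vert\sum_i\overline{\xi_i}(Tf_i)(\omega)\bigr\vert_H^p\,d\sigma(\xi)$ with the genuine integrand $\bigl(\sum_i\vert(Tf_i)(\omega)\vert_H^2\bigr)^{p/2}$. Inserting the analogous spherical representation inside $H$ and applying Jensen's inequality to $t\mapsto t^{p/2}$, one finds that the average \emph{dominates} the integrand when $p\leq 2$ (the exponent makes $t\mapsto t^{p/2}$ concave); combined with the two previous displays this closes the argument for $1<p\leq 2$.

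The main obstacle I anticipate is the range $p>2$. There the convexity in the last step reverses, so the spherical average is now dominated by the true integrand and the estimate can no longer be closed in this way; concretely one is meeting the fact that a Hilbert space does not embed completely isometrically into $L^p$ for $p>2$. Passing to the adjoint does not rescue the argument: $(T\overline{\otimes}I_K)^{*}=T^{*}\overline{\otimes}I_K$ with $T^{*}\colon L^{p'}(\Omega;H)\to L^{p'}(\Omega)$, so duality merely turns the case $p>2$ into the companion statement for an operator of the opposite shape at the conjugate exponent $p'<2$, which sits in exactly the same unfavorable range. Hence the case $p>2$ is not a formal consequence of the scalar result and calls for a genuinely separate argument; I expect this to be the delicate point of the proof.
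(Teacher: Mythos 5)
Your spherical-averaging argument for $1<p\le 2$ is, in substance, the proof the paper has in mind: the paper's entire proof is the instruction to adapt the rotation-invariance proof of the scalar case in \cite[Chap. V, Thm. 2.7]{GCRF}, and that is what you carry out. One step needs repair, however. After inserting the spherical representation inside $H$ and using the scalar identity in the variable $\xi$, what you must prove is
$\kappa_{m,p}\int_{S_H}\bigl(\sum_i\vert\langle v_i,\eta\rangle\vert^2\bigr)^{p/2}\,d\sigma(\eta)\ge\bigl(\sum_i\vert v_i\vert_H^2\bigr)^{p/2}$,
and Jensen's inequality for the \emph{concave} function $t\mapsto t^{p/2}$ bounds this spherical average from above, not from below, so as written the step is inconclusive. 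The inequality is nevertheless true, and the correct elementary tool is the superadditivity (reverse Minkowski inequality) of the $L^{p/2}$ quasi-norm: since $p/2\le 1$,
\begin{equation*}
\Bignorm{\sum_i\vert\langle v_i,\cdot\rangle\vert^2}_{L^{p/2}(\sigma)}\,\ge\,
\sum_i\bignorm{\vert\langle v_i,\cdot\rangle\vert^2}_{L^{p/2}(\sigma)}
\,=\,\kappa_{m,p}^{-2/p}\sum_i\vert v_i\vert_H^2,
\end{equation*}
which upon raising to the power $p/2$ gives exactly the pointwise domination you need. With that repair, your treatment of $1<p\le2$ (lower bound, finite-dimensional reduction, averaging) is correct.

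Your diagnosis of $p>2$ is not merely a gap in the method: the norm equality asserted in the lemma is \emph{false} for every $p>2$, so the ``genuinely separate argument'' you anticipate does not exist. Concretely (real scalars for simplicity), let $\gamma$ be the standard Gaussian probability measure on $\Omega=\Rdb^M$, let $x_1,\ldots,x_M$ be the coordinate functions, let $H=K=\ell^2_M$, and define $T\colon L^p(\gamma)\to L^p(\gamma;H)$ by letting $Tf$ be the constant function $\bigl(\int fx_m\,d\gamma\bigr)_{m\le M}$. By duality, $\norm{T}=\sup\bigl\{\norm{a_1x_1+\cdots+a_Mx_M}_{p'}:\sum_m a_m^2\le1\bigr\}=\norm{g}_{p'}$, the $L^{p'}$-norm of a single standard Gaussian variable, which is $<1$ because $p'<2$. (This $T$ is the adjoint of the isometric Gaussian embedding of $\ell^2_M$ into $L^{p'}$ --- precisely the phenomenon your closing remark gestures at.) Testing $T\otimes I_K$ on $f_j=x_j/\norm{x}_2$, $1\le j\le M$, for which $(\sum_j\vert f_j\vert^2)^{1/2}\equiv 1$, one computes $Tf_j=\frac{c_M}{M}e_j$ with $c_M=\int\norm{x}_2\,d\gamma$, so the image has constant pointwise $H\hten K$ norm $c_M/\sqrt M\to1$ as $M\to\infty$. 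Hence $\norm{T\overline{\otimes}I_K}/\norm{T}$ can be made arbitrarily close to $\norm{g}_2/\norm{g}_{p'}>1$; as $p\to\infty$ this ratio tends to the little Grothendieck constant $\sqrt{\pi/2}$.

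What survives for $p>2$ --- and is all the paper actually uses, since Lemma \ref{FVarphi} and everything downstream are stated with unspecified constants --- is boundedness up to a universal constant: $L^p(\Omega)$ and $L^p(\Omega;H)$ are Banach lattices, the lattice square function of finitely many elements of $L^p(\Omega;H)$ coincides pointwise with the $H\hten K$ norm, and Krivine's theorem \cite[Thm. 1.f.14]{LT} yields $\norm{T\overline{\otimes}I_K}\le K_G\norm{T}$ for all $1<p<\infty$ (your Gaussian computation, run in the lossy direction and dualized, gives the sharp constant $\norm{g}_2/\norm{g}_{p'}$). So the accurate conclusion is: the lemma with equality of norms holds exactly on the range $1<p\le2$ covered by your proof; for $p>2$ both your attempted adaptation and the paper's proof-by-citation must fail, because only $\norm{T\overline{\otimes}I_K}\lesssim\norm{T}$ is true --- a weakening that leaves the rest of the paper unaffected.
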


This can be easily shown by adapting the proof 
of the scalar case (i.e. $H=\Cdb$) given in \cite[Chap. V; Thm. 2.7]{GCRF}. Details are left to the reader.

Let $(\Lambda,\nu)$ be an auxiliary measure space and recall the
isometric duality isomorphism
$$
L^p(\Omega; L^2(\Lambda))^*\,=\, L^{p'}(\Omega; L^2(\Lambda)),
$$
that we will use throughout without further reference.

Following \cite[Def. 2.7]{JLX},
we say that an element $u$ of $L^p(\Omega; L^2(\Lambda))$ is represented
by a measurable function $\varphi\colon \Lambda\to L^p(\Omega)$ provided that 
$\langle \varphi(\cdotp),y\rangle$ belongs to $L^2(\Lambda)$ for any
$y\in L^{p'}(\Omega)$ and 
$$
\langle u, y\otimes b\rangle=\,\int_{\Lambda} \langle\varphi(t),y\rangle\, b(t)\, d\nu(t),
\qquad y\in L^{p'}(\Omega),\, b\in L^{2}(\Lambda).
$$
Such a reprentation is necessarily unique and
$$
\Bignorm{\Bigl(\int_{\Lambda}\bigl\vert\varphi(t)\bigr\vert^2\, 
d\nu(t)\,\Bigr)^{\frac{1}{2}}}_{L^p(\Omega)}
$$
is the norm of $u$ in $L^p(\Omega; L^2(\Lambda))$. In this case we simply say that 
$\varphi$ belongs to $L^p(\Omega; L^2(\Lambda))$, and make no difference between $u$ and $\varphi$.
If $1<p<2$, $L^p(\Omega; L^2(\Lambda))\subset L^2(\Lambda; L^p(\Omega))$ contractively,
hence every element of $L^p(\Omega; L^2(\Lambda))$ can be represented
by a measurable function $\Lambda\to L^p(\Omega)$. However this is no longer
true if $p>2$, as shown in \cite[App. B]{JLX}.

\bigskip

We let $J=(0,\infty)$ equipped with Lebesgue measure $dt$. The above discussion applies
to the definition of square functions. Namely consider $(T_t)_{t\geq 0}$ and $A$ as in (\ref{Alpha}),
and let $\alpha>0$ and $x\in L^p(\Omega)$. The function $\varphi\colon J\to L^p(\Omega)$
defined by $\varphi(t)=t^{\alpha -\frac{1}{2}} A^\alpha T_t(x)$ is continuous.
When it belongs to $L^p(\Omega;L^2(J))$, then $\norm{x}_{A,\alpha}$ is equal to its norm in 
that space. Otherwise we have $\norm{x}_{A,\alpha}=\infty$.

The following lemma will be used in the analysis of square functions.

\begin{lemma}\label{FVarphi}
Let $\Gamma\colon J\to B(L^p(\Omega))$ be a continuous function such that 
$\Gamma(\cdotp)x$ represents an element of  $L^p(\Omega;L^2(J))$ for any $x\in L^p(\Omega)$,
and there exists a constant $C\geq 0$ such that
$$
\Bignorm{\Bigl(\int_{0}^{\infty}\bigl\vert \Gamma(t)x\bigr\vert^2\, dt\,\Bigr)^{\frac{1}{2}}}_{L^p(\Omega)}
\,\leq\,C\norm{x}_p,\qquad x\in L^p(\Omega).
$$
Let $\varphi\colon J\to L^p(\Omega)$ be a continuons function representing an element of 
$L^p(\Omega;L^2(J))$. Then the function $(s,t)\mapsto \Gamma(t)\varphi(s)$ represents an element
of $L^p(\Omega;L^2(J^2))$ and 
\begin{equation}\label{FVarphi1}
\Bignorm{\Bigl(\int_{0}^{\infty}\int_{0}^{\infty}\bigl\vert \Gamma(t)\varphi(s)\bigr\vert^2\, dsdt \,\Bigr)^{\frac{1}{2}}}_{L^p(\Omega)}\,
\leq C\,\Bignorm{\Bigl(\int_{0}^{\infty}\bigl\vert
\varphi(s)\bigr\vert^2\, ds\,\Bigr)^{\frac{1}{2}}}_{L^p(\Omega)}.
\end{equation}
\end{lemma}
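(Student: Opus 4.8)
The plan is to reinterpret the hypothesis on $\Gamma$ as the boundedness of a single operator and then feed it into Lemma \ref{Tensor}. Since the norm of the element of $L^p(\Omega;L^2(J))$ represented by $\Gamma(\cdot)x$ equals $\norm{(\int_{0}^{\infty}\vert\Gamma(t)x\vert^2\,dt)^{1/2}}_p$, the assumption says exactly that the linear map $T\colon L^p(\Omega)\to L^p(\Omega;L^2(J))$ sending $x$ to the element represented by $\Gamma(\cdot)x$ is well defined (by uniqueness of representations) and bounded with $\norm{T}\leq C$. Applying Lemma \ref{Tensor} with $H=K=L^2(J)$ and the canonical identification $L^2(J)\hten L^2(J)=L^2(J^2)$, I obtain a bounded operator $R=T\overline{\otimes}I_{L^2(J)}\colon L^p(\Omega;L^2(J))\to L^p(\Omega;L^2(J^2))$ with $\norm{R}\leq C$. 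Setting $u=R\varphi$, this already gives $\norm{u}_{L^p(\Omega;L^2(J^2))}\leq C\norm{\varphi}_{L^p(\Omega;L^2(J))}$, the right-hand side of (\ref{FVarphi1}). Everything then reduces to showing that $u$ is represented by the function $\Psi(s,t)=\Gamma(t)\varphi(s)$, for in that case $\norm{u}_{L^p(\Omega;L^2(J^2))}$ is precisely the left-hand side of (\ref{FVarphi1}).

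To identify $u$, I would test it against elementary tensors. Elements of $L^2(J^2)=L^2(J)\hten L^2(J)$ are limits of finite sums of products $\beta\otimes\gamma$ with $\beta,\gamma\in L^2(J)$, so it suffices to compute $\langle u,y\otimes(\beta\otimes\gamma)\rangle$ for $y\in L^{p'}(\Omega)$. Using the duality $L^p(\Omega;L^2(\Lambda))^*=L^{p'}(\Omega;L^2(\Lambda))$ recalled above, the Banach-space adjoint $R^*\colon L^{p'}(\Omega;L^2(J^2))\to L^{p'}(\Omega;L^2(J))$ satisfies $R^*(y\otimes(\beta\otimes\gamma))=T^*(y\otimes\beta)\otimes\gamma$; this is verified on the dense subspace $L^p(\Omega)\otimes L^2(J)$ directly from $R=T\otimes I_{L^2(J)}$ there. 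Then, using first that $\varphi$ represents an element of $L^p(\Omega;L^2(J))$ and then that, for each fixed $s$, the hypothesis applied to $x=\varphi(s)$ shows $T\varphi(s)$ is represented by $\Gamma(\cdot)\varphi(s)$, I would unwind
\[\langle u,y\otimes(\beta\otimes\gamma)\rangle=\langle\varphi,T^*(y\otimes\beta)\otimes\gamma\rangle=\int_{0}^{\infty}\int_{0}^{\infty}\langle\Gamma(t)\varphi(s),y\rangle\,\beta(t)\gamma(s)\,dt\,ds.\]
By linearity this is the defining pairing of a representation by $\Psi$, tested against every $g$ in the dense subspace of $L^2(J^2)$ spanned by such products.

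Finally, I would upgrade this to the full representation. The map $(s,t)\mapsto\Gamma(t)\varphi(s)$ is continuous from $J^2$ into $L^p(\Omega)$ by continuity of $\Gamma$ and of $\varphi$, so for fixed $y$ the scalar function $F_y(s,t)=\langle\Gamma(t)\varphi(s),y\rangle$ is continuous, hence measurable and locally integrable. The identity above, together with $\vert\langle u,y\otimes g\rangle\vert\leq\norm{u}_{L^p(\Omega;L^2(J^2))}\norm{y}_{p'}\norm{g}_{L^2(J^2)}$, shows that $g\mapsto\int_{J^2}F_y\,g$ is bounded by $\norm{u}\,\norm{y}_{p'}$ on a dense subspace of $L^2(J^2)$. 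A standard Riesz-representation argument (comparing $F_y$ with the $L^2(J^2)$ function representing this functional, and using that a locally integrable function annihilating every element of $C_c(J^2)$ vanishes almost everywhere) then forces $F_y\in L^2(J^2)$ and extends the pairing identity to all $g\in L^2(J^2)$. This is exactly the assertion that $\Psi$ represents $u$, and combined with the first paragraph it yields (\ref{FVarphi1}).

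I expect the genuine obstacle to be this last identification rather than the norm bound. When $p>2$ one cannot represent a general element of $L^p(\Omega;L^2(J))$ by approximating $\varphi$ with elementary tensors in the ambient norm, since the inclusion into $L^2(J;L^p(\Omega))$ fails (as noted after Lemma \ref{Tensor}); thus the naive density argument for the representation breaks down. The feature that rescues the proof is precisely the continuity hypothesis on $\Gamma$ and $\varphi$, which makes $F_y$ a bona fide continuous function and allows the clean duality computation on products $\beta\otimes\gamma$ to be promoted to an honest $L^2(J^2)$-membership statement, uniformly in $p$.
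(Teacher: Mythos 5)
Your proposal is correct, and its skeleton is the same as the paper's: define $T(x)=\Gamma(\cdotp)x$, invoke Lemma \ref{Tensor} with $L^2(J)\hten L^2(J)\simeq L^2(J^2)$, identify $(T\overline{\otimes}I_{L^2(J)})(\varphi)$ with the function $(s,t)\mapsto\Gamma(t)\varphi(s)$ by testing against a dense class of tensors, and upgrade using continuity of $F_y(s,t)=\langle\Gamma(t)\varphi(s),y\rangle$ plus duality. Where you genuinely differ is the middle computation. The paper proves the pairing identity (\ref{FVarphi2}) for $\phi\in\A\otimes\A$ head-on: it expands $\phi=\sum_i a_i\otimes b_i$ with $(a_i)_i$ orthonormal, forms the Bochner integrals $z_i=\int_J a_i(s)\varphi(s)\,ds$, and uses the orthogonal projection $Q$ onto the span of the $\overline{a_i}$'s to recognize the result as $\langle (T\overline{\otimes}I_{L^2(J)})(\varphi),y\otimes\phi\rangle$. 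You instead compute the Banach-space adjoint on elementary tensors, $R^*(y\otimes(\beta\otimes\gamma))=T^*(y\otimes\beta)\otimes\gamma$, and unwind the two representations (of $\varphi$, and of $T(\varphi(s))$ for each fixed $s$); this eliminates the orthonormalization and projection trick and is arguably cleaner. The price is that you must be slightly more careful about what your formula means: for general $\beta,\gamma\in L^2(J)$ it is only an iterated integral (absolute convergence over $J^2$ can fail), so to run your final annihilation argument against $C_c(J^2)$ you should test on compactly supported $\beta,\gamma$, where continuity of $F_y$ makes Fubini legitimate, and then pass from the span of such products to all of $C_c(J^2)$ by a (standard but real) Stone--Weierstrass approximation with supports in a fixed compact set. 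These are routine repairs, and the paper makes the same choice implicitly: its class $\A$ consists exactly of compactly supported square-integrable functions, for the same reason.
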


\begin{proof}
According to the assumption on $\Gamma$, we may define a bounded operator
$T\colon L^p(\Omega)\to L^p(\Omega;L^2(J))$ by letting $T(x)= \Gamma(\cdotp)x$ for any 
$x\in L^p(\Omega)$. Recall that we have a natural identification $L^2(J)\hten L^2(J)\simeq
L^2(J^2)$. Hence applying Lemma \ref{Tensor}, we obtain an extension
\begin{equation}\label{Tensor-T}
T\overline{\otimes} I_{L^2(J)}\colon L^p(\Omega;L^2(J))\longrightarrow L^p(\Omega; L^2(J^2)),
\end{equation}
whose norm is equal to $\norm{T}$.

Let $\A\subset L^2(J)$ be the (dense) subspace of all square summable functions
with compact support. For any $y\in L^{p'}(\Omega)$, the function
$(s,t)\mapsto \langle \Gamma(t)\varphi(s),y\rangle$ is continuous hence its product
with any element of $\A\otimes\A$ is integrable. We claim that for any
$\phi\in \A\otimes\A$,
\begin{equation}\label{FVarphi2}
\int_{J^2} \langle \Gamma(t)\varphi(s),y\rangle\, \phi(s,t)\, dsdt\, = 
\bigl\langle (T\overline{\otimes} I_{L^2(J)})(\varphi), 
y\otimes\phi\bigr\rangle.
\end{equation}
To prove this, consider $\phi\in \A\otimes\A$; one
can find finite families
$(a_i)_i$ and $(b_i)_i$ in $\A$ such that $(a_i)_i$ is an orthonormal family, and
$$
\phi=\sum_i a_i\otimes b_i.
$$
Since $\varphi$ is continuous, each $a_i\varphi\colon J\to L^p(\Omega)$ is integrable
and we may define
$$
z_i=\int_{J} a_i(s) \varphi(s)\, ds
$$
in $L^p(\Omega)$. Then we have
\begin{align*}
\int_{J^2}  \langle \Gamma(t)\varphi(s),y\rangle\, \phi(s,t)\, dsdt\,
& =\, \int_{J^2} \sum_i \langle \Gamma(t)\varphi(s),y\rangle\,a_i(s)b_i(t)\, dsdt\\
& = \, \int_{J}\sum_i \langle \Gamma(t)z_i,y\rangle\, b_i(t)\, dt\\
& =\, \sum_i \langle T(z_i), y\otimes b_i\rangle\\
& =\,  \Bigl\langle \sum_i T(z_i)\otimes \overline{a_i}, \sum_i 
y\otimes a_i\otimes b_i\Bigr\rangle.
\end{align*}
Let $Q\colon L^2(J)\to L^2(J)$ denote the orthogonal projection onto the linear span
of the $\overline{a_i}$'s. Then we have 
$$
\sum_i z_i\otimes 
\overline{a_i}=\bigl(I_{L^p}\overline{\otimes} Q\bigr)(\varphi).
$$
Hence the above calculation shows that
\begin{align*}
\int_{J^2}  \langle \Gamma(t)\varphi(s),y\rangle\, \phi(s,t)\, dsdt\, 
& =
\bigl \langle (T\otimes I_{L^2(J)})\bigl(I_{L^p}\overline{\otimes} Q\bigr)(\varphi), 
y\otimes\phi\bigr\rangle\\
& = 
\bigl\langle\bigl(I_{L^p}\overline{\otimes}Q \overline{\otimes} I_{L^2(J)} \bigr)
(T\overline{\otimes} I_{L^2(J)})(\varphi), y\otimes\phi\bigr\rangle\\
& = 
\bigl\langle (T\overline{\otimes} I_{L^2(J)})(\varphi), 
y\otimes (Q^*\otimes I_{L^2(J)})(\phi)\bigr\rangle.
\end{align*}
Moreover in the duality considered here, $(Q^*\otimes I_{L^2(J)})(\phi)=\phi$, 
hence we obtain (\ref{FVarphi2}).

The latter identity shows that 
$$
\Bigl\vert \int_{J^2}  \langle \Gamma(t)\varphi(s),y\rangle\, 
\phi(s,t)\, dsdt\,\Bigr\vert\leq\norm{T}\norm{y}_{p'}
\norm{\phi}_{L^2(J^2)}\,\norm{\varphi}_{L^p(\Omega;L^2(J))}.
$$
By the density of $\A\otimes \A$ in $L^2(J^2)$ and duality, this shows that
$(s,t)\mapsto \langle \Gamma(t)\varphi(s),y\rangle\,$ belongs to $L^2(J^2)$. 
By densiy again, (\ref{FVarphi2}) holds true as well for any $\phi\in L^2(J^2)$
and any $y\in L^{p'}(\Omega)$. Hence $(s,t)\mapsto \Gamma(t)\varphi(s)$
belongs to $L^p(\Omega; L^2(J^2))$ and represents $(T\overline{\otimes} I_{L^2(J)})(\varphi)$.
Then the inequality (\ref{FVarphi1}) follows at once.
\end{proof}

\bigskip
We now turn to Rademacher averages and $R$-boundedness. Let $(\varepsilon_k)_{k\geq 1}$
be a sequence of independent Rademacher variables on some probability space $\Omega_0$.
For any Banach space $X$, we let ${\rm Rad}(X)\subset L^{2}(\Omega_0;X)$ denote the closed linear 
span of finite sums $\sum_k\varepsilon_k\otimes x_k\,$, with $x_k\in X$. 
A well-known application of Khintchine's inequality is that 
we have an isomorphism ${\rm Rad}(L^p(\Omega))\simeq 
L^p(\Omega;\ell^2)$. The argument for this result (see e.g. \cite[pp. 73-74]{LT})
shows as well that if $K$ is any Hilbert space, then we have
${\rm Rad}(L^p(\Omega;K))\simeq L^p(\Omega;\ell^2(K))$. Thus if
$(\varphi_k)_k$ is a finite family of functions $\Lambda\to L^p(\Omega)$ 
reprensenting elements of $L^p(\Omega;L^2(\Lambda))$, we have
\begin{equation}\label{Rad}
\Bignorm{\sum_k\varepsilon_k\otimes \varphi_k}_{{\rm Rad}(L^p(\Omega;L^2(\Lambda)))}
\,\approx\,\Bignorm{\Bigl(\int_{\Lambda}\sum_k\vert\varphi_k(t)\vert^2\, 
d\nu(t)\,\Bigr)^{\frac{1}{2}}}_{L^p(\Omega)}.
\end{equation}

By definition, a set $F\subset B(X)$ is $R$-bounded if there is a constant $C\geq 0$ such that
$$
\Bignorm{\sum_k\varepsilon_k\otimes T_k(x_k)}_{{\rm Rad}(X)}\,\leq\, C
\Bignorm{\sum_k\varepsilon_k\otimes x_k}_{{\rm Rad}(X)}
$$
for any finite families $(T_k)_{k}$ in $F$ and $(x_k)_{k}$ in $X$.

Let $A$ be a sectorial operator on $X$. We say that $A$ is $R$-sectorial of $R$-type
$\omega\in (0,\pi)$ if $\sigma(A)\subset\overline{\Sigma_{\omega}}$ and for any angle
$\theta\in (\omega,\pi)$, the set
$$
\bigl\{ z(z-A)^{-1}\, :\, z\in \Cdb\setminus \overline{\Sigma_{\theta}}\bigr\}
$$
is $R$-bounded. Clearly this condition is a strengthening of (\ref{Sector}).

$R$-boundedness goes back to \cite{BG,CPSW}, whereas
$R$-sectoriality was introduced by Weis \cite{W1}. That fundamental paper was the
starting point of a bunch of applications of $R$-boundedness to 
various questions involving multipliers, square functions
and functional calculi. See in particular 
\cite{KuW,KW1} and the references therein. 
We will use the following result (see \cite[Prop. 5.1]{KW1}) which
shows the role of $R$-boundedness in the problem of reducing
the angle of a bounded $H^\infty$ functional calculus.

\begin{proposition}\label{KW} (Kalton-Weis)
Let $1<p<\infty$, let $A$ be a sectorial operator on $L^p(\Omega)$
and let $0<\omega<\theta_0<\pi$ be two angles such that $A$
has a bounded $H^{\infty}(\Sigma_{\theta_0})$ functional calculus, and
$A$ is $R$-sectorial of $R$-type $\omega$. Then for any $\theta>\omega$, the
operator $A$ has a bounded $H^{\infty}(\Sigma_{\theta})$ functional calculus.
\end{proposition}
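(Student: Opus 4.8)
The plan is to fix an intermediate angle $\theta\in(\omega,\theta_0)$ and to prove a uniform bound $\norm{f(A)}\lesssim\norm{f}_{\infty,\theta}$ for $f\in H^\infty_0(\Sigma_\theta)$; this is by definition a bounded $H^\infty(\Sigma_\theta)$ calculus, and since $\theta\in(\omega,\theta_0)$ is arbitrary the conclusion follows. I would run McIntosh's square function argument inside the $L^p(\Omega;L^2(J))$ framework of Section~2. The one genuine difficulty is that the given $H^\infty(\Sigma_{\theta_0})$ calculus controls square functions only for symbols analytic on the \emph{wide} sector $\Sigma_{\theta_0}$, whereas reaching the angle $\theta$ requires square function control for symbols analytic on the \emph{narrow} sector $\Sigma_\theta$. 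Supplying this aperture reduction is precisely the role of the $R$-sectoriality hypothesis, and it is the main obstacle.

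The ingredients I would assemble first are as follows. From the bounded $H^\infty(\Sigma_{\theta_0})$ calculus together with \cite[Sec.~6]{CDMY}, applied to $A$ and, via $f(A)^*=f(A^*)$, to $A^*$, I record the wide-angle square function estimates $\Bignorm{\bigl(\int_0^\infty\vert\psi(tA)x\vert^2\,\frac{dt}{t}\bigr)^{\frac12}}_p\lesssim\norm{x}_p$ and its analogue for $A^*$, for a fixed nonzero $\psi\in H^\infty_0(\Sigma_{\theta_0})$. Next I fix $\phi,\widetilde\phi\in H^\infty_0(\Sigma_\theta)$ with $\int_0^\infty\phi(t\lambda)\widetilde\phi(t\lambda)\,\frac{dt}{t}=1$ on $\Sigma_\theta$, which yields the reproducing identity $\langle f(A)x,y\rangle=\int_0^\infty\langle f(A)\phi(tA)x,\widetilde\phi(tA^*)y\rangle\,\frac{dt}{t}$. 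Reading the right-hand side through the duality $L^p(\Omega;L^2(J))^*=L^{p'}(\Omega;L^2(J))$, it is at most $\bignorm{G(A)x}_{L^p(\Omega;L^2(J))}\cdot\norm{y}_{A^*,\widetilde\phi}$, where $G(z)=f(z)\,(\phi(t z))_{t>0}$ defines an $L^2(J)$-valued bounded analytic function on $\Sigma_\theta$ with $\norm{G}_{\infty,\theta}\lesssim\norm{f}_{\infty,\theta}$ (because $\int_0^\infty\vert\phi(tz)\vert^2\,\frac{dt}{t}$ is bounded on $\Sigma_\theta$), and $G(A)x=\bigl((f\phi(t\,\cdot))(A)x\bigr)_{t}$ is the associated $L^2(J)$-valued functional calculus.

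Everything thus reduces to the narrow-angle vector-valued estimate $\bignorm{G(A)x}_{L^p(\Omega;L^2(J))}\lesssim\norm{G}_{\infty,\theta}\norm{x}_p$ (whose scalar case for $A^*$ is $\norm{y}_{A^*,\widetilde\phi}\lesssim\norm{y}_{p'}$), and this is where $R$-sectoriality enters. Inserting the wide-angle reproducing formula $x=\int_0^\infty\widetilde\psi(sA)\psi(sA)x\,\frac{ds}{s}$ (with $\widetilde\psi\in H^\infty_0(\Sigma_{\theta_0})$ normalised by $\int_0^\infty\psi(s\lambda)\widetilde\psi(s\lambda)\,\frac{ds}{s}=1$) inside each component gives $(f\phi(t\,\cdot))(A)x=\int_0^\infty M(t,s)\,[\psi(sA)x]\,\frac{ds}{s}$ with kernel $M(t,s)=(f\,\phi(t\,\cdot)\,\widetilde\psi(s\,\cdot))(A)$, so $G(A)x$ is the image of the wide-angle square function $(\psi(sA)x)_{s}$ under the operator-valued integral operator $\mathbb{T}\colon(h_s)_s\mapsto\bigl(\int_0^\infty M(t,s)h_s\,\frac{ds}{s}\bigr)_t$ on $L^p(\Omega;L^2(J))$. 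Now $M(t,s)=\frac{1}{2\pi i}\int_{\partial\Sigma_\nu}f(\lambda)\phi(t\lambda)\widetilde\psi(s\lambda)(\lambda-A)^{-1}\,d\lambda$, with $\omega<\nu<\theta$, exhibits $M(t,s)$ as an $L^1$-average of the resolvents $\lambda(\lambda-A)^{-1}$, $\lambda\in\partial\Sigma_\nu$, which are $R$-bounded by $R$-sectoriality; hence $\{M(t,s)\}_{t,s}$ is $R$-bounded with $R$-bound $\lesssim\norm{f}_{\infty,\theta}$. Moreover the scalar size $\sup_z\vert f(z)\phi(tz)\widetilde\psi(sz)\vert$ decays off the diagonal in $\log(t/s)$, by the $H^\infty_0$ decay of $\phi$ and $\widetilde\psi$, giving a Schur-type kernel on the multiplicative group $J$. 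The crux — the step I expect to be hardest — is then an $R$-bounded Schur test, using the equivalence (\ref{Rad}) between Rademacher averages and $L^p(\ell^2)$ norms together with Lemmas \ref{Tensor} and \ref{FVarphi}, to conclude that $\mathbb{T}$ is bounded on $L^p(\Omega;L^2(J))$ with $\norm{\mathbb{T}}\lesssim\norm{f}_{\infty,\theta}$.

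Granting this, applying $\mathbb{T}$ to $(\psi(sA)x)_s$ and using the wide-angle square function estimate bounds the first factor by $\norm{f}_{\infty,\theta}\norm{x}_p$. The second factor is the scalar ($\Cdb$-valued) instance of the same narrow-angle estimate for $A^*$, which is sectorial on $L^{p'}(\Omega)$, $R$-sectorial of $R$-type $\omega$ (the adjoints of an $R$-bounded family on $L^p(\Omega)$ being $R$-bounded on $L^{p'}(\Omega)$, by (\ref{Rad})), and endowed with a bounded $H^\infty(\Sigma_{\theta_0})$ calculus; so the previous paragraph applies verbatim to $A^*$ and gives $\norm{y}_{A^*,\widetilde\phi}\lesssim\norm{y}_{p'}$. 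Combining the two factors in the reproducing identity yields $\vert\langle f(A)x,y\rangle\vert\lesssim\norm{f}_{\infty,\theta}\norm{x}_p\norm{y}_{p'}$ for all $f\in H^\infty_0(\Sigma_\theta)$, i.e.\ a bounded $H^\infty(\Sigma_\theta)$ functional calculus. Letting $\theta$ decrease to $\omega$ completes the proof.
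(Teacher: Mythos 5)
The first thing to note is that the paper contains no proof of Proposition \ref{KW} at all: it is imported verbatim from Kalton--Weis, with the pointer \cite[Prop. 5.1]{KW1}, and is then used as a black box to deduce Theorem \ref{Main} from Theorem \ref{Main2}. So your proposal can only be measured against the cited source and against its own internal coherence. On that score, your outline is a faithful reconstruction of the known proof mechanism: McIntosh's duality scheme (a reproducing formula, then Cauchy--Schwarz in the duality $L^p(\Omega;L^2(J))^*=L^{p'}(\Omega;L^2(J))$), reduction to a narrow-angle vector-valued square function estimate, and the injection of $R$-sectoriality through the operator-valued kernel $M(t,s)=(f\,\phi(t\cdot)\,\widetilde\psi(s\cdot))(A)$, whose contour representation over $\partial\Sigma_\nu$, $\omega<\nu<\theta$, yields $R$-bounds with off-diagonal decay in $\log(t/s)$. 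This is essentially the argument of \cite{KW1} (see also \cite{LM2} for the same mechanism). One point you use silently and must cite, since the paper never states it: the passage from ``$M(t,s)$ is an integral average of the $R$-bounded resolvents $\lambda(\lambda-A)^{-1}$'' to ``$\{M(t,s)\}$ is $R$-bounded with localized $R$-bound $\lesssim\norm{f}_{\infty,\theta}\min(t/s,s/t)^{\epsilon}$'' rests on the convexity/integral-average stability of $R$-boundedness (see \cite{CPSW} and \cite{KW1}); what the Schur test needs is precisely these $R$-bounds localized to the strips $2^j\le t/s<2^{j+1}$ and decaying in $|j|$, not merely global $R$-boundedness plus pointwise decay of the scalar symbol, as your phrasing suggests.

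The genuine shortfall is that the step you yourself identify as the crux --- the ``$R$-bounded Schur test'' giving $\norm{\mathbb{T}}\lesssim\norm{f}_{\infty,\theta}$ on $L^p(\Omega;L^2(J))$ --- is asserted rather than proved, and the paper-internal tools you invoke for it are not the right ones. Lemma \ref{FVarphi} only treats kernels of the special product form $\Gamma(t)\varphi(s)$ (a fixed square-function-bounded family tensored against a square function, with no integration in $s$); it cannot handle a kernel $M(t,s)$ depending jointly on $(t,s)$, which is the whole difficulty, and Lemma \ref{Tensor} is similarly off target. What is actually needed, and what makes the result true, is: (a) a diagonal-multiplier lemma --- a strongly continuous $R$-bounded family $\{N(u)\}_{u\in U}$ acts boundedly on $L^p(\Omega;L^2(U))$ by $(h_u)_u\mapsto(N(u)h_u)_u$, with norm controlled by the $R$-bound; this is where (\ref{Rad}) enters, via discretization and Khintchine; and (b) a dyadic decomposition $\mathbb{T}=\sum_{j\in\Zdb}\mathbb{T}_j$ along $2^j\le t/s<2^{j+1}$, where on each strip Cauchy--Schwarz over the unit logarithmic $s$-interval reduces $\mathbb{T}_j$ to a diagonal multiplier indexed by the strip, so that $\norm{\mathbb{T}_j}\lesssim\norm{f}_{\infty,\theta}2^{-\epsilon|j|}$ by the localized $R$-bounds, and the series in $j$ is summed. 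With (a) and (b) supplied the proof closes; without them it is an outline whose hardest step is a black box. Two smaller omissions: the reproducing identities hold only on $\overline{R(A)}$, so you need the decomposition (\ref{Decomp}) together with the fact that $f(A)$ vanishes on $N(A)$ for $f\in H^\infty_0$; and the final limit ``$\theta$ decrease to $\omega$'' is unnecessary --- angles $\theta\ge\theta_0$ are covered by hypothesis, so treating each fixed $\theta\in(\omega,\theta_0)$, as you do, already gives the full statement.
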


\section{Square function estimates imply $R$-sectoriality}

Our main result is the following. 

\begin{theorem}\label{Main2}
Let $A$ be a sectorial operator operator of type $<\frac{\pi}{2}$ on $L^p(\Omega)$, with
$1<p<\infty$. Assume that $A$ and $A^*$ satisfy square function estimates
\begin{equation}\label{SFE3}
\norm{x}_{A,\frac{1}{2}}\lesssim\norm{x}_p,\qquad x\in L^{p}(\Omega),
\end{equation}
and
\begin{equation}\label{SFE4}
\norm{y}_{A^*,\frac{1}{2}}\lesssim\norm{y}_{p'},\qquad y\in L^{p'}(\Omega).
\end{equation}
Then $A$ is $R$-sectorial of $R$-type $<\frac{\pi}{2}$.
\end{theorem}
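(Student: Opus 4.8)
The plan is to derive $R$-sectoriality from a factorization of the rescaled resolvents through the two square functions, the decisive point being that the multipliers which appear form an $R$-bounded family. Since $z(z-A)^{-1}=I+A(z-A)^{-1}$ and a singleton is trivially $R$-bounded, and since the substitution $\lambda=-z$ gives $z(z-A)^{-1}=\lambda(\lambda+A)^{-1}$, the theorem reduces to producing a single angle $\gamma>\frac{\pi}{2}$ for which the family $\bigl\{\lambda(\lambda+A)^{-1}:\lambda\in\Sigma_\gamma\bigr\}$ is $R$-bounded. Indeed, $\{z(z-A)^{-1}:z\notin\overline{\Sigma_\theta}\}$ is $R$-bounded exactly when $\{\lambda(\lambda+A)^{-1}:\lambda\in\Sigma_{\pi-\theta}\}$ is, so such a $\gamma$ furnishes $R$-type $\pi-\gamma<\frac{\pi}{2}$, and all larger angles follow by passing to subfamilies.

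Let $\omega_0<\frac{\pi}{2}$ denote the type of $A$ and put $\delta=\frac{\pi}{2}-\omega_0>0$. For $|\phi|<\delta$ the bounded analytic semigroup satisfies $\sup_{t>0}\norm{T_{e^{i\phi}t}}<\infty$, so the Laplace representation of the resolvent may be rotated onto the ray $e^{i\phi}(0,\infty)$. Writing $AT_{e^{i\phi}t}=(A^{\frac12}T_{e^{i\phi}t/2})(A^{\frac12}T_{e^{i\phi}t/2})$ and changing variables leads, for $\lambda$ with $\mathrm{Re}(e^{i\phi}\lambda)\geq0$, to the identity
\begin{equation*}
A(\lambda+A)^{-1}=2e^{i\phi}\int_0^\infty e^{-2e^{i\phi}\lambda s}\,(A^{\frac12}T_{e^{i\phi}s})(A^{\frac12}T_{e^{i\phi}s})\,ds.
\end{equation*}
I would read this through the operators $U_\phi\colon L^p(\Omega)\to L^p(\Omega;L^2(J))$, $U_\phi x=\bigl(A^{\frac12}T_{e^{i\phi}s}x\bigr)_{s\in J}$, and $W_\phi\colon L^{p'}(\Omega)\to L^{p'}(\Omega;L^2(J))$, $W_\phi y=\bigl(A^{*\frac12}T^{*}_{e^{i\phi}s}y\bigr)_{s\in J}$. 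Using $(A^{\frac12}T_{e^{i\phi}s})^{*}=A^{*\frac12}T^{*}_{e^{i\phi}s}$ and the duality $L^p(\Omega;L^2(J))^{*}=L^{p'}(\Omega;L^2(J))$, pairing against $y\in L^{p'}(\Omega)$ turns the display into the clean factorization
\begin{equation*}
A(\lambda+A)^{-1}=2e^{i\phi}\,W_\phi^{*}\,M_{\lambda,\phi}\,U_\phi,
\end{equation*}
where $M_{\lambda,\phi}$ is multiplication by the scalar function $s\mapsto e^{-2e^{i\phi}\lambda s}$ on $L^p(\Omega;L^2(J))$. The apparent divergence in the first display is harmless: once the outer factors $U_\phi,W_\phi^{*}$ are known to be bounded, the underlying pairing converges by Cauchy--Schwarz, and the factorization holds as a bounded-operator identity.

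Now fix $\gamma$ with $\frac{\pi}{2}<\gamma<\frac{\pi}{2}+\delta$ and cover $\Sigma_\gamma$ by finitely many subsectors, on each of which a single rotation $\phi$ with $|\phi|<\delta$ can be chosen so that $\mathrm{Re}(e^{i\phi}\lambda)\geq0$ for every $\lambda$ in the piece; then $\sup_\lambda\norm{e^{-2e^{i\phi}\lambda\,\cdot}}_{L^\infty(J)}\leq1$. The three factors are controlled independently. The operators $U_\phi$ and $W_\phi^{*}$ are fixed and bounded (this is the point addressed below), and composition with fixed bounded operators preserves $R$-boundedness; so the whole matter reduces to the $R$-boundedness of the multiplier family. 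This is the mechanism at the heart of the argument: by (\ref{Rad}) together with the pointwise estimate $\sum_k|m_k(s)\varphi_k(s)|^2\leq\sum_k|\varphi_k(s)|^2$, the set $\{M_m:\norm{m}_{L^\infty(J)}\leq1\}$ is $R$-bounded on $L^p(\Omega;L^2(J))$. Taking the finite union over the subsectors yields the $R$-boundedness of $\{A(\lambda+A)^{-1}:\lambda\in\Sigma_\gamma\}$, hence of $\{\lambda(\lambda+A)^{-1}:\lambda\in\Sigma_\gamma\}$, as required.

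What remains, and what I expect to be the real obstacle, is the boundedness of $U_\phi$ and $W_\phi$ for $\phi\neq0$. For $\phi=0$ these are precisely the hypotheses (\ref{SFE3}) and (\ref{SFE4}); note that the choice $\phi=0$ alone only covers $\mathrm{Re}\,\lambda\geq0$ and so would give merely $R$-type $\frac{\pi}{2}$, so the rotation is exactly what buys the strict inequality $\gamma>\frac{\pi}{2}$. For $\phi\neq0$, boundedness of $U_\phi$ is the square function estimate $\norm{x}_{\psi_\phi}\lesssim\norm{x}_p$ attached to the auxiliary function $\psi_\phi(\zeta)=\zeta^{\frac12}e^{-e^{i\phi}\zeta}=e^{-i\phi/2}\psi_0\bigl(e^{i\phi}\zeta\bigr)$, where $\psi_0(\zeta)=\zeta^{\frac12}e^{-\zeta}$; since $\psi_\phi\in H^{\infty}_{0}(\Sigma_\nu)$ for any $\nu\in(\omega_0,\frac{\pi}{2}-|\phi|)$, this is just a square function estimate for the one operator $A$ written with a different auxiliary function. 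I would deduce it from (\ref{SFE3}) by the standard comparison of the square functions attached to two auxiliary functions of a sectorial operator, with a constant depending only on an upper bound for $|\phi|$, so that the finitely many rotations are uniformly controlled; the analogous argument applied to $A^{*}$ handles $W_\phi$. This comparison is delicate in the $L^p(\Omega;L^2(J))$ setting, and I expect it to be precisely where Lemma \ref{FVarphi} enters, as the device that lets one compose a genuine (straight) square function operator with another and control the resulting two-variable square function, thereby converting the pointwise subordination bounds coming from a reproducing formula into the required $L^p(\Omega;L^2(J))$ inequality. Once these rotated estimates are secured the proof closes, everything else being bookkeeping with the factorization and the reservoir of $R$-bounded multipliers.
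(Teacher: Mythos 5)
Your factorization framework is sound as far as it goes: the reduction to $R$-boundedness of $\{\lambda(\lambda+A)^{-1}\}$ on a sector, the Laplace-transform factorization through $U_\phi$, $M_{\lambda,\phi}$, $W_\phi^*$, and the $R$-boundedness of contractive scalar multipliers on $L^p(\Omega;L^2(J))$ via (\ref{Rad}) are all correct, and at $\phi=0$ they do yield $R$-boundedness of $\{\lambda(\lambda+A)^{-1}:\mathrm{Re}\,\lambda\geq 0\}$. But the step you yourself flag as the real obstacle --- boundedness of $U_\phi$ and $W_\phi$ for $\phi\neq 0$ --- is a genuine gap, and the resolution you propose is circular. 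The ``standard comparison of the square functions attached to two auxiliary functions'' is not available here: on $L^p(\Omega)$ (as opposed to Hilbert space) the known proofs of such comparison theorems require $A$ to be $R$-sectorial of $R$-type $<\frac{\pi}{2}$ --- this is the main theorem of \cite{LM2} --- which is exactly the conclusion of Theorem \ref{Main2}; indeed the paper notes in Section 4.1 that without $R$-sectoriality even the pairwise equivalence of the $\norm{\cdot}_{A,\alpha}$ is open. Nor can Lemma \ref{FVarphi} substitute for it: that lemma composes two square functions both taken along the positive real axis and, after Fubini, produces only estimates of the form $\norm{x}_{A,\alpha+\beta}\lesssim\norm{x}_p$ --- same auxiliary function $e^{-\zeta}$, larger exponent --- and cannot change $e^{-\zeta}$ into $e^{-e^{i\phi}\zeta}$. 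An attempted workaround such as writing $A^{\frac{1}{2}}T_{e^{i\phi}s}x = T_{(e^{i\phi}-\epsilon)s}\bigl(A^{\frac{1}{2}}T_{\epsilon s}x\bigr)$ and composing diagonally needs the square-function boundedness (equivalently, on $L^p$, the $R$-boundedness) of the complex-time family $\{T_z : z\in\Sigma_{\delta'}\}$, which by Weis's results is again equivalent to the $R$-sectoriality being proved. So the rotation cannot be bought from hypotheses (\ref{SFE3}) and (\ref{SFE4}) by these means.

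The paper sidesteps rotation entirely, and this is the idea your proof is missing: by \cite[Section 4]{W1}, for a sectorial operator of type $<\frac{\pi}{2}$, $R$-sectoriality of $R$-type $<\frac{\pi}{2}$ is \emph{equivalent} to $R$-boundedness of the two real-time families $F_1=\{T_t : t\geq 0\}$ and $F_2=\{tAT_t : t\geq 0\}$; it is Weis's theorem, not a rotated square function, that converts real-time information into an angle strictly below $\frac{\pi}{2}$. The real-time $R$-boundedness is then obtained by essentially your $\phi=0$ mechanism, in Rademacher-average form: a reverse estimate $\norm{x}_p\lesssim\norm{x}_{A,\frac{1}{2}}$ on $\overline{R(A)}$ dual to (\ref{SFE4}), the forward estimates $\norm{x}_{A,\frac{1}{2}}\lesssim\norm{x}_p$ and $\norm{x}_{A,\frac{3}{2}}\lesssim\norm{x}_p$ (the latter derived from (\ref{SFE3}) by iterating Lemma \ref{FVarphi}), and the elementary contraction $\int_{t_k}^{\infty}\leq\int_{0}^{\infty}$ playing the role of your multipliers $M_{\lambda,\phi}$. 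If you replace your rotation step by this appeal to \cite{W1}, your argument closes and becomes, in substance, the paper's proof.
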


According to Proposition \ref{KW} and the discussion before Theorem \ref{Main}, 
the latter is a consequence of Theorem \ref{Main2}.

\begin{proof}[Proof of Theorem \ref{Main2}]
We assume (\ref{SFE3}) and (\ref{SFE4}). 
Consider the two sets 
$$
F_1=\bigl\{T_t\, :\, t\geq 0\bigr\}
\qquad\hbox{and}\qquad
F_2=\bigl\{tAT_t\, :\, t\geq 0\bigr\}.
$$
According to \cite[Section 4]{W1}, showing that $A$ is $R$-sectorial 
of $R$-type $<\frac{\pi}{2}$ is equivalent to showing that $F_1$ and $F_2$ are $R$-bounded.

Before getting to the proof of these two properties, we need to establish a 
few intermediate estimates. We first show that the square function
$\norm{x}_{A,1}$ is finite for any $x\in L^p(\Omega)$ and that we have a
uniform estimate
\begin{equation}\label{SFE1}
\norm{x}_{A,1}\lesssim\norm{x}_p,\qquad
x\in L^p(\Omega).
\end{equation}
Indeed set $\Gamma(t)=A^{\frac{1}{2}}T_t$ for any $t>0$, consider
$x\in L^p(\Omega)$ and set $\varphi(s)=A^{\frac{1}{2}}T_s (x)$ for 
any $s>0$. Then
$$
\Gamma(t)\varphi(s) = A^{\frac{1}{2}}T_t\bigl(A^{\frac{1}{2}}T_s (x)\bigr)
= AT_{t+s}(x),\qquad t,s>0.
$$
Applying twice the estimate (\ref{SFE3}) together with Lemma \ref{FVarphi}, we deduce an estimate
$$
\Bignorm{\Bigl(\int_{0}^{\infty}\int_{0}^{\infty} \bigl\vert AT_{t+s}(x)\bigr\vert^2\, dsdt \,\Bigr)^{\frac{1}{2}}}_{L^p(\Omega)}\,
\lesssim\,\norm{x}_p,\qquad x\in L^p(\Omega).
$$
Now observe that at the pointwise level, we have
\begin{align*}
\int_{0}^{\infty} t\bigl\vert AT_t(x)\bigr\vert^2\,dt\ 
&=\,\int_{0}^{\infty} \Bigl(\int_{0}^{t}\, ds\,\Bigr) \bigl\vert AT_t(x)\bigr\vert^2\,dt\\
&=\,\int_{0}^{\infty}\int_{s}^{\infty} \bigl\vert AT_t(x)\bigr\vert^2\,dt\, ds\\
&=\,\int_{0}^{\infty}\int_{0}^{\infty} \bigl\vert AT_{t+s}(x)\bigr\vert^2\,dsdt\,.
\end{align*}
This yields (\ref{SFE1}).

Second we show that the square function
$\norm{x}_{A,\frac{3}{2}}$ is finite for any $x\in L^p(\Omega)$ and that we have a
uniform estimate
\begin{equation}\label{SFE32}
\norm{x}_{A,\frac{3}{2}}\lesssim\norm{x}_p,\qquad
x\in L^p(\Omega).
\end{equation}
Indeed using (\ref{SFE1}) and  (\ref{SFE3}), and arguing as above, we obtain 
an estimate
$$
\Bignorm{\Bigl(\int_{0}^{\infty}\int_{0}^{\infty} s \bigl\vert A^{\frac{3}{2}}
T_{t+s}x\bigr\vert^2\, dsdt \,\Bigr)^{\frac{1}{2}}}_{L^p(\Omega)}\,
\lesssim\,\norm{x}_p,\qquad x\in L^p(\Omega).
$$
This implies (\ref{SFE32}), since
\begin{align*}
\int_{0}^{\infty} t^2\bigl\vert A^{\frac{3}{2}} T_t(x)\bigr\vert^2\,dt\ 
&=\,2\,\int_{0}^{\infty} \Bigl(\int_{0}^{t}\,s\, ds\,\Bigr) \bigl\vert A^{\frac{3}{2}}
T_t(x)\bigr\vert^2\,dt\\
&=\, 2 \,\int_{0}^{\infty} s \int_{s}^{\infty} \bigl\vert A^{\frac{3}{2}}
T_t(x)\bigr\vert^2\,dt\, ds\\
&=\,2 \,\int_{0}^{\infty}\int_{0}^{\infty}  s \bigl\vert A^{\frac{3}{2}}
T_{t+s}(x)\bigr\vert^2\,dsdt\,.
\end{align*}

In the sequel, we let $R(A)$ and $N(A)$ denote the range and the kernel of
$A$, respectively. It is well-known that we have a direct sum decomposition
\begin{equation}\label{Decomp}
L^{p}(\Omega) = N(A)\oplus\overline{R(A)},
\end{equation}
see e.g. \cite[Thm 3.8]{CDMY}.

We observe that the dual estimate (\ref{SFE4}) implies 
a uniform reverse estimate
\begin{equation}\label{Reverse}
\norm{x}_p\lesssim \norm{x}_{A,\frac{1}{2}},\qquad
x\in \overline{R(A)}.
\end{equation}
This follows from a well-known duality argument, we briefly give a proof for the sake of 
completeness. For any integer $n\geq 1$, let $g_n$ be the rational function
defined by $g_n(z)=n^2 z(n+z)^{-1}(1+nz)^{-1}$. 
For any $x\in L^p(\Omega)$, we have
$$
g_n(A)x=\,2\,\int_{0}^{\infty} AT_{2t}g_n(A)x\, dt,
$$
by \cite[Lem. 6.5 (1)]{JLX}. Moreover the sequence $\bigl(g_n(A)\bigr)_{n\geq 1}$
is bounded. Hence for any 
$y$ in $L^{p'}(\Omega)$, we have
\begin{align*}
\frac{1}{2}\,\bigl\vert \langle g_n(A)x,y\rangle\bigr\vert \,
&=\,\Bigl\vert \int_{0}^{\infty} \bigl\langle AT_{2t}g_n(A)x,y\bigr\rangle\, dt\,\Bigr\vert\\
&=\,\Bigl\vert \int_{0}^{\infty} \bigl\langle A^{\frac{1}{2}} T_{t}(x),
A^{*\frac{1}{2}} T_{t}^{*}g_n(A)^*y\bigr\rangle\, dt\,\Bigr\vert\\
&\leq\,\norm{x}_{A,\frac{1}{2}}\norm{g_n(A)^*y}_{A^*,\frac{1}{2}}\qquad\hbox{by Cauchy-Schwarz},\\
&\lesssim\, 
\norm{x}_{A,\frac{1}{2}}\norm{g_n(A)^*y}_{p'} \,\lesssim
\norm{x}_{A,\frac{1}{2}}\norm{y}_{p'}\qquad\hbox{by (\ref{SFE4})}.
\end{align*}
If $x\in \overline{R(A)}$, then $g_n(A)x\to x$ when $n\to\infty$ (see \cite[Thm. 3.8]{CDMY}),
hence (\ref{Reverse}) follows by taking the supremum over all $y$ in the 
unit ball of $L^{p'}(\Omega)$.

Let $(x_k)_k$ be a finite family of $\overline{R(A)}$ and for any $k$, set $\varphi_k(t)=A^{\frac{1}{2}}T_t(x_k)$
for any $t>0$. Averaging (\ref{Reverse}), we obtain that
$$
\Bignorm{\sum_k\varepsilon_k\otimes x_k}_{{\rm Rad}(L^p(\Omega))}
\,\lesssim\,\Bignorm{\sum_k\varepsilon_k\otimes \varphi_k}_{{\rm Rad}(L^p(\Omega;L^2(J)))}.
$$
Then applying (\ref{Rad}) we deduce a
uniform estimate
\begin{equation}\label{Rad-Reverse}
\Bignorm{\sum_k\varepsilon_k\otimes x_k}_{{\rm Rad}(L^p(\Omega))}
\,\lesssim\,
\Bignorm{\Bigl(\sum_k\int_{0}^{\infty}\bigl\vert 
A^{\frac{1}{2}}T_t(x_k)\bigr\vert^2\, dt\,\Bigr)^{\frac{1}{2}}}_{L^p(\Omega)}
\end{equation}
for finite families $(x_k)_k$ of elements of $\overline{R(A)}$. 

By the same averaging principle, the assumption (\ref{SFE3}) implies  a
uniform estimate
\begin{equation}\label{Rad-SFE12}
\Bignorm{\Bigl(\sum_k\int_{0}^{\infty}\bigl\vert 
A^{\frac{1}{2}}T_t(x_k)\bigr\vert^2\, dt\,\Bigr)^{\frac{1}{2}}}_{L^{p}(\Omega)}
\,\lesssim\,
\Bignorm{\sum_k\varepsilon_k\otimes x_k}_{{\rm Rad}(L^p(\Omega))}
\end{equation}
for finite families $(x_k)_k$ of elements of $L^p(\Omega)$.

Likewise, (\ref{SFE32}) implies that we have a uniform estimate
\begin{equation}\label{Rad-SFE32}
\Bignorm{\Bigl(\sum_k\int_{0}^{\infty} t^2\bigl\vert 
A^{\frac{3}{2}}T_t(x_k)\bigr\vert^2\, dt\,\Bigr)^{\frac{1}{2}}}_{L^{p}(\Omega)}
\,\lesssim\,
\Bignorm{\sum_k\varepsilon_k\otimes x_k}_{{\rm Rad}(L^p(\Omega))}
\end{equation}
for finite families $(x_k)_k$ of elements of $L^p(\Omega)$.

We now turn to $R$-boundedness proofs. Let $(t_k)_k$ be a 
finite family of nonnegative real numbers. 

For any $x_1, x_2,\ldots$ in 
$\overline{R(A)}$, we have $T_{t_k}(x_k)\in \overline{R(A)}$ for any $k$, hence
$$
\Bignorm{\sum_k\varepsilon_k\otimes T_{t_k}(x_k)}_{{\rm Rad}(L^p(\Omega))}
\,\lesssim\,
\Bignorm{\Bigl(\sum_k\int_{0}^{\infty}\bigl\vert A^{\frac{1}{2}}T_{t+t_k}
(x_k)\bigr\vert^2\, dt\,\Bigr)^{\frac{1}{2}}}_{L^p(\Omega)}
$$
by (\ref{Rad-Reverse}).
Moreover we have
\begin{align*}
\Bignorm{\Bigl(\sum_k\int_{0}^{\infty}\bigl\vert A^{\frac{1}{2}}T_{t+t_k}
(x_k)\bigr\vert^2\, dt\,\Bigr)^{\frac{1}{2}}}_{L^p(\Omega)}\,
&=\,
\Bignorm{\Bigl(\sum_k\int_{t_k}^{\infty}\bigl\vert A^{\frac{1}{2}}T_{t}
(x_k)\bigr\vert^2\, dt\,\Bigr)^{\frac{1}{2}}}_{L^p(\Omega)}\\
&\leq\,
\Bignorm{\Bigl(\sum_k\int_{0}^{\infty}\bigl\vert A^{\frac{1}{2}}T_{t}
(x_k)\bigr\vert^2\, dt\,\Bigr)^{\frac{1}{2}}}_{L^p(\Omega)}.
\end{align*}
Applying (\ref{Rad-SFE12}), we deduce an estimate
$$
\Bignorm{\sum_k\varepsilon_k\otimes T_{t_k}(x_k)}_{{\rm Rad}(L^p(\Omega))}
\,\lesssim\,
\Bignorm{\sum_k\varepsilon_k\otimes  x_k}_{{\rm Rad}(L^p(\Omega))}.
$$
Since $T_t(x)=x$ for any $x\in N(A)$, the above estimate 
and (\ref{Decomp}) show that the set $F_1$ is $R$-bounded.

Next consider $x_1, x_2,\ldots$ in $L^p(\Omega)$. Applying (\ref{Rad-Reverse}) again, we have
\begin{align*}
\Bignorm{\sum_k\varepsilon_k\otimes t_kAT_{t_k}(x_k)}_{{\rm Rad}(L^p(\Omega))}
\,&\lesssim\,\Bignorm{\Bigl(\sum_k\int_{0}^{\infty}t_k^2\bigl\vert A^{\frac{3}{2}}T_{t+t_k}
(x_k)\bigr\vert^2\, dt\,\Bigr)^{\frac{1}{2}}}_{L^p(\Omega)}\\
&\lesssim\,\Bignorm{\Bigl(\sum_k\int_{0}^{\infty}(t+t_k)^2\bigl\vert A^{\frac{3}{2}}T_{t+t_k}
(x_k)\bigr\vert^2\, dt\,\Bigr)^{\frac{1}{2}}}_{L^p(\Omega)}\\
&\lesssim\,\Bignorm{\Bigl(\sum_k\int_{t_k}^{\infty}t^2\bigl\vert A^{\frac{3}{2}}T_{t}
(x_k)\bigr\vert^2\, dt\,\Bigr)^{\frac{1}{2}}}_{L^p(\Omega)}\\
&\lesssim\,\Bignorm{\Bigl(\sum_k\int_{0}^{\infty}t^2\bigl\vert A^{\frac{3}{2}}T_{t}
(x_k)\bigr\vert^2\, dt\,\Bigr)^{\frac{1}{2}}}_{L^p(\Omega)}.
\end{align*}
According to (\ref{Rad-SFE32}), this implies the estimate
$$
\Bignorm{\sum_k\varepsilon_k\otimes t_kAT_{t_k}(x_k)}_{{\rm Rad}(L^p(\Omega))}
\,\lesssim\,\Bignorm{\sum_k\varepsilon_k\otimes x_k}_{{\rm Rad}(L^p(\Omega))},
$$
which shows that the set $F_2$ is $R$-bounded.
\end{proof}

\section{Concluding remarks}

\noindent
{\it 4.1 Comparing square functions estimates.}
Let $A$ be a sectorial operator of type $<\frac{\pi}{2}$ on $L^p(\Omega)$, with
$1<p<\infty$. 
If $A$ is $R$-sectorial of $R$-type $<\frac{\pi}{2}$, then 
the square functions $\norm{\ }_{A,\alpha}$ defined by (\ref{Alpha}) are
pairwise equivalent, by \cite[Thm. 1.1]{LM2}. We do not know if this
equivalence property holds in the general (non $R$-sectorial) case.
The proof of Theorem \ref{Main2} shows that if $A$ satisfies
an estimate $\norm{x}_{A,\frac{1}{2}}\lesssim \norm{x}_p$
on $L^p(\Omega)$, then it also satisfies estimates 
$\norm{x}_{A,1}\lesssim \norm{x}_p$ and 
$\norm{x}_{A,\frac{3}{2}}\lesssim \norm{x}_p$, which is a step towards that direction. It
is easy to check (left to the reader) that with the
same techniques, one obtains the following:
for any positive numbers $\alpha,\beta>0$, 
square function estimates
$$
\norm{x}_{A,\alpha}\lesssim \norm{x}_p
\qquad\hbox{and}\qquad
\norm{x}_{A,\beta}\lesssim \norm{x}_p
$$
imply a square function estimate
$$
\norm{x}_{A,\alpha+\beta}\lesssim \norm{x}_p.
$$

\smallskip
\noindent
{\it 4.2 Variants of the main result.}
Using the above observation, the proof of Theorem \ref{Main2} can be easily adapted
to show the following generalization: let $q\geq 1$ be an integer, let $\beta>0$ be a 
positive real number and assume that $A$ and $A^*$ satisfy square function estimates
$$
\norm{x}_{A,\frac{1}{q}}\lesssim \norm{x}_p
\qquad\hbox{and}\qquad
\norm{y}_{A^*,\beta}\lesssim \norm{y}_{p'}.
$$
Then $A$ is $R$-sectorial of $R$-type $<\frac{\pi}{2}$.

This implies that Theorem \ref{Main} holds as well with 
(\ref{SFE}) replaced by
$$
\norm{x}_{A,1}\lesssim\norm{x}_p
\qquad\hbox{and}\qquad
\norm{y}_{A^*,1}\lesssim\norm{y}_{p'}.
$$

\smallskip
\noindent
{\it 4.3 Other Banach spaces.} Any bounded set of operators on Hilbert space is automatically
$R$-bounded.
In the case $p=2$ (more generally, for sectorial operators on Hilbert space), 
Theorem \ref{Main} reduces to McIntosh's fundamental Theorem \cite{MI}.

In the last decade, various square functions associated to sectorial operators 
on general Banach spaces (not only on Hilbert spaces or $L^p$-spaces) have been 
studied in relation with $H^\infty$ functional calculus, see \cite{KW2}, \cite{KKW}
\cite{LM3} and the references therein. It is therefore
natural to wonder whether Theorem \ref{Main} can be extended to 
other contexts.

In a positive direction, we note that if $X$ is a reflexive Banach lattice  
and if square functions associated to sectorial
operators are defined as in (\ref{SFE}) then Theorem \ref{Main} holds 
true on $X$. This actually extends (for adapted square functions)
to the case when $X$ is a reflexive Banach space with property $(\alpha)$.
We refer the reader to \cite{LM4} for more on that theme.

However we do not know if Theorem \ref{Main} holds true on noncomutative $L^p$-spaces.
See \cite{JLX} for a thorough study of square functions associated to sectorial
operators on those spaces.


\begin{thebibliography}{99}
\bibitem{BG} E. Berkson, and T. A. Gillespie, {\it Spectral decompositions and
harmonic analysis on UMD Banach spaces}, Studia Math. 112 (1994), 13-49.
\bibitem{CPSW} P. Cl\'{e}ment, B. de Pagter, F. A. Sukochev, and H. Witvliet,
{\it Schauder decompositions and multiplier theorems}, Studia
Math. 138 (2000), 135-163.
\bibitem{CDMY} M. Cowling, I. Doust, A. McIntosh, and A. Yagi, {\it Banach
space operators with a bounded $H^{\infty}$ functional calculus},
J. Aust. Math. Soc., Ser. A  60 (1996), 51-89.
\bibitem{DV} G. Dore, and A. Venni, {\it On the closedness of the sum of two closed
operators}, Math. Z. 196 (1987), 189-201.
\bibitem{GCRF} J. Garcia-Cuerva, and J. L. Rubio de Francia, {\it Weighted norm inequalities}.
North-Holland Mathematics Studies 116, 1985, x+604 pp.
\bibitem{H} M. Haase, {\it The functional calculus for sectorial operators}, 
Operator Theory: Advances and Applications, 169, Birkhäuser Verlag, Basel, 2006, xiv+392 pp.
\bibitem{JLX} M. Junge, C. Le Merdy, and Q. Xu, {\it $H^{\infty}$
functional calculus and square functions on noncommutative
$L^p$-spaces}, Soc. Math. France, Ast\'erisque 305, 2006.
\bibitem{K} N. Kalton, {\it A remark on sectorial operators with an $H^\infty$-calculus}, 
Contemp. Math. 321 (2003), 91–99.
\bibitem{KKW} N. Kalton, P. Kunstmann, and L. Weis, {\it Perturbation and interpolation theorems for the $H^\infty$-calculus with applications to differential operators},
Math. Ann. 336 (2006),  747-801. 
\bibitem{KW1} N. J. Kalton, and L. Weis, {\it The $H^\infty$-calculus and sums of closed operators},
Math. Ann. 321 (2001), 319-345.
\bibitem{KW2} N. J. Kalton, and L.  Weis, {\it The $H^\infty$-functional
calculus and square function estimates}, Unpublished manuscript (2004).
\bibitem{KuW} P. C. Kunstmann,  and L. Weis, {\it Maximal $L_p$-regularity for
parabolic equations, Fourier multiplier theorems and
$H^\infty$-functional calculus}, pp. 65-311 in ``Functional
analytic methods for evolution equations", Lect. Notes in Math. 1855, Springer, 2004.
\bibitem{LM1} C. Le Merdy, {\it $H^\infty$-functional calculus and applications to maximal regularity},  
Publ. Math. Besan\c con 16 (1998), 41-77.
\bibitem{LM2} C. Le Merdy,   {\it On square functions associated to sectorial operators},
Bull. Soc. Math. France  132  (2004),  137-156.
\bibitem{LM3} C. Le Merdy, {\it Square functions, bounded analytic semigroups, and applications},
Banach Center Publ.  75 (2007), 191-220.
\bibitem{LM4} C. Le Merdy, {\it $H^\infty$ functional calculus and square function estimates
for Ritt operators}, Preprint 2011. 
\bibitem{LT} J. Lindenstrauss, and L. Tzafriri, {\it Classical Banach spaces II}, Springer-Verlag, Berlin, 1979.
\bibitem{MI} A. McIntosh, {\it Operators which have an $H^\infty$ functional calculus}, Proc. CMA Canberra
14 (1986), 210-231.
\bibitem{W1} L. Weis, {\it Operator valued Fourier multiplier theorems
and maximal regularity}, Math. Annalen 319 (2001), 735-758.




\end{thebibliography}
\end{document}